\theoremstyle{plain}
\newtheorem{thm}{Theorem}[section]
\newtheorem*{mt*}{Main Theorem}
\newtheorem*{cj*}{Conjecture}
\newtheorem*{nt*}{Notations}
\newtheorem{prop}[thm]{Proposition}
\newtheorem{lemma}[thm]{Lemma}
\newtheorem{cor}{Corollary}
\newtheorem{rem}{Remark}
\newtheorem{example}{Example}[section]
\theoremstyle{definition}
\newcommand{\ideal}[1]{\mathfrak{#1}}
\newcommand{\m}{\ideal{m}}
\newcommand{\func}[1]{\mathrm{#1} \,}
\newcommand{\grade}{\func{grade}}
\newcommand{\height}{\func{ht}}
\newcommand{\Ass}{\func{Ass}}
\newcommand{\Ker}{\func{Ker}}
\newcommand{\NN}{{\mathbb N}}
\newcommand{\ZZ}{{\mathbb Z}}
\title[]{Examples of local Cohomology Modules for Ramified Regular Local Rings having Finite Set of Associated Primes}
\author[]{Rajsekhar Bhattacharyya}
\address{Dinabandhu Andrews College, Garia, Kolkata 700084, India}
\email{rbhattacharyya@gmail.com}
\keywords{Local Cohomology}
\subjclass[2010]{13D45}
\begin{document}

\begin{abstract}
Lyubeznik's conjecture, (\cite{Ly1}, Remark 3.7) asserts the finiteness of the set ssociated primes of local cohomology modules for regular rings. But, in the case of ramified regular local ring, it is open. Recently, in Theorem 1.2 of \cite{Nu1}, it is proved that in any Noetherian regular local ring $S$ and for a fixed ideal $J\subset S$, associated primes of local cohomology $H^i_J(S)$ for $i\geq 0$ is finite, if it does not contain $p$. In this paper, we use this result to construct examples of local cohomology modules for ramified regular local ring so that they have finitely many associated primes.
\end{abstract}

\maketitle

\section{introduction}

Consider a Noetherian ring $R$ and an $R$-module $M$. For an ideal $I\subset R$ and for some integer $i\geq 0$, let $H^i_I (M)$ be $i$-th local cohomology module with support in the ideal $I$. In the fourth problem of \cite{Hu}, it is asked that whether local cohomology modules of Noetherian rings have finitely many associated prime ideals. There are several important examples where we have the finiteness of associated primes of local cohomology modules: (1) For regular rings of prime characteristic \cite{HS}, (2) for regular local and affine rings of characteristic zero \cite{Ly1}, (3) for unramified regular local rings of mixed characteristic \cite{Ly2} and (4) for smooth algebra over $\ZZ$ and $\ZZ_{p\ZZ}$ \cite{BBLSZ}. All these results support Lyubeznik's conjecture, (\cite{Ly1}, Remark 3.7): 

\begin{cj*} Let $R$ be a regular ring and $I\subset R$ be its ideal, then for each $i\geq 0$, $i$-th local cohomology module $H^i_I (R)$ has finitely many associated prime ideals.
\end{cj*}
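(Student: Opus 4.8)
The plan is to reduce the statement to the case of a regular local ring and then argue separately in each characteristic regime, invoking in every case the finiteness machinery that is known for local cohomology of regular rings. Since $\p$ is an associated prime of $H^i_I(R)$ exactly when $\p R_\p$ is an associated prime of $H^i_{IR_\p}(R_\p)$, and $R_\p$ is again regular, the decisive content lies over a regular local ring $(S,\m)$; controlling the associated primes uniformly across all localizations is what ultimately yields finiteness of the whole set, but the essential input is the local theory.

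In equal characteristic zero I would equip $S$ with the action of its ring of differential operators and show that $H^i_I(S)$ is a holonomic $D$-module. Holonomic modules have finite length, and each simple subquotient contributes a single associated prime, so $\Ass H^i_I(S)$ is finite; this is Lyubeznik's route in \cite{Ly1}. In equal prime characteristic $p$ I would instead exploit the Frobenius, realizing $H^i_I(S)$ as an $F$-finite module in Lyubeznik's sense (equivalently the Huneke--Sharp argument of \cite{HS}), a formalism in which finiteness of $\Ass$ is automatic. For unramified mixed characteristic, Lyubeznik's analogue of the $D$-module theory over an unramified base \cite{Ly2} gives the same finite-length conclusion.

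The hard part --- and the reason the statement remains a conjecture rather than a theorem --- is ramified mixed characteristic, where $p$ is not part of a regular system of parameters and none of the three formalisms above applies. Here I would partition $\Ass H^i_I(S)$ according to whether a prime contains $p$. For primes $\p$ with $p\notin\p$, inverting $p$ returns us to the equal-characteristic-zero picture, and indeed Theorem 1.2 of \cite{Nu1} already guarantees that the associated primes not containing $p$ form a finite set. The genuinely open difficulty is to bound the associated primes lying in $\V(p)$: reduction modulo $p$ does not interact with local cohomology in a way that preserves finiteness, and no Frobenius- or $D$-module-type structure is known to survive ramification. My expectation is that this $p$-containing locus is exactly where a general proof must break new ground, so the realistic goal in the present setting is not to settle the conjecture but to produce explicit ramified examples in which the associated primes inside $\V(p)$ can be computed by hand and shown to be finite, which together with \cite{Nu1} verifies the conjecture in those instances.
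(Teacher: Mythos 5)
You have not proved the statement, and neither does the paper: it is recorded there as a conjecture (Lyubeznik's conjecture, \cite{Ly1}, Remark 3.7) and remains open, for exactly the reason you identify --- the ramified mixed characteristic case. Your survey of the settled cases (holonomic $D$-modules in equal characteristic zero \cite{Ly1}, Frobenius/$F$-module methods in characteristic $p$ \cite{HS}, \cite{Ly2} for the unramified mixed case, and Theorem 1.2 of \cite{Nu1} for the primes not containing $p$) matches the paper's introduction, and your proposed fallback is precisely the program the paper executes: rather than computing the associated primes inside $\V(p)$ ``by hand,'' Proposition 3.1 uses an Eisenstein presentation $S=R[X]/f(X)$ over an unramified ring $R$ and a Bockstein-style diagram chase to show that $p$ is a nonzerodivisor on $H^i_J(S)$ for suitable $J$ and $i$, so that \emph{no} associated prime contains $p$, whereupon \cite{Nu1} gives finiteness; Theorem 3.2, Corollary 1 and Theorem 3.3 then manufacture the examples --- and even there only for $i$ outside $\height{J}+1,\ \height{J}+2$, which is why the paper's contribution is a family of examples and not the conjecture.

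One genuine error in your sketch deserves flagging: the opening reduction to the local case does not work as stated. It is true that $\p\in\Ass_R H^i_I(R)$ if and only if $\p R_\p\in\Ass_{R_\p} H^i_{IR_\p}(R_\p)$, but finiteness of $\Ass$ over every localization does not bound the global set --- a priori infinitely many distinct primes $\p$ could each become associated after localizing at themselves, so ``controlling the associated primes uniformly across all localizations'' is exactly the content you would still owe, not a formality. This is why the global results, such as \cite{BBLSZ} for smooth $\ZZ$-algebras and the affine characteristic-zero case of \cite{Ly1}, are separate theorems rather than corollaries of the local theory; any honest treatment of the conjecture for general regular rings must either work globally or supply the missing uniformity, and your proposal does neither.
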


Recently, in \cite{Pu2}, it is shown that for excellent regular ring $R$ of dimension $d$, containing a field of characteristic zero, for an ideal $I\subset R$, $\Ass_R H^{d-1}_I (R)$ is finite. 

Lyubeznik's conjecture is open for the ramified regular local ring, but recently, there is a significant progress (see \cite{Nu1}). In fact, in Theorem 1.2 of \cite{Nu1}, it is proved that in a Noetherian regular local ring $S$ and for a fixed ideal $J\subset S$, associated primes of local cohomology $H^i_J(S)$ for $i\geq 0$ is finite, if it does not contain $p$. In this paper, we use this result to construct examples of local cohomology modules of ramified regular local ring so that they have finitely many associated primes.

In section 2, we review some of the basic results which we need in proving our main results. In section 3, we prove our main results. At first, in Proposition 3.1, we prove a new variant of the result of Theorem 2.1 of \cite{Bh}. Using Proposition 3.1, in Theorem 3.2, we prove our first main result: Consider a ramified regular local ring $S$ of mixed characteristic $p>0$ of dimension $d$ and choose a regular system of parameters such that $p$ is not in the ideal generated by any $d-1$ of them. Let $J$ be an ideal generated by monomials formed out of $d-1$ such a regular system of parameters. Consider an ideal $\beta$ in a polynomial rings in $d-1$ indeterminates over the prime field, generated by similar monomials but now formed out of $d-1$ indeterminates. If $\beta$ is perfect, then $\Ass_S H^i_J (S)$ is finite for every $i\geq 0$ except $i= \height{J}+1, \height{J}+2$. It is important to note that, here the monomials are generated by regular system of parameters, which is obviously a system of parameter, but it does not contain $p$ as one of the parameter.

As a consequence of Theorem 3.2, in Corollary 1, we have our next important result which can be stated as follows: Consider ramified regular local ring $S$ of dimension $d$ and let there be positive integers $m$ and $n$ such that $d-1= mn$. Consider a set of regular system of parameters where $p$ is not in the ideal generated by any $d-1$ of them. If $J$ is an ideal, generated by size $t$ minors of $m\times n$ matrix formed by $d-1$ such regular system of parameters, then for every $i\geq 0$, except $i= \height{J}+1, \height{J}+2$, $\Ass_{S} H^{i}_{J}(S)$ is finite. In Theorem 3.3, we present our third main result. There, we construct examples of ramified regular local rings, so that for a countable set of ideals, there are countable set of local cohomology modules with supports in those ideals, having finite set of associated primes.
 

\section{basic results}

In this section, we discuss two basic results which we need to prove the results in the next section. For local cohomology modules we have the following well known result, regarding the behaviour of associated primes of local cohomology modules under faithfully flat extension.

\begin{lemma}
Let $S$ be faithfully flat over $R$. Then, for the local cohomology modules of an arbitrary $R$-module $M$, for every ideal $I\subset R$ and for $i\geq 0$, $\Ass_{S} H^i_{IS}(M\otimes_R S)$ is a finite set if and only if $\Ass_{R} H^i_I(M)$ is a finite set.
\end{lemma}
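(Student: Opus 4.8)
The plan is to reduce the statement to two standard base-change facts and then track associated primes through them. Since $S$ is flat over $R$, the functor $\Gamma_{IS}(- \otimes_R S)$ agrees with $\Gamma_I(-) \otimes_R S$, and flatness lets this identification pass to the derived functors; hence for every $i \geq 0$ there is a natural isomorphism $H^i_{IS}(M \otimes_R S) \cong H^i_I(M) \otimes_R S$. Writing $N := H^i_I(M)$, the lemma becomes the purely module-theoretic statement that, for a faithfully flat map $R \to S$ of Noetherian rings and an arbitrary $R$-module $N$, the set $\Ass_S(N \otimes_R S)$ is finite if and only if $\Ass_R N$ is finite.

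For this I would invoke the flat base-change formula for associated primes,
\[
\Ass_S(N \otimes_R S) = \bigcup_{\p \in \Ass_R N} \Ass_S(S/\p S).
\]
Although this is usually stated for finitely generated modules, $N$ need not be finitely generated here, so the first point to check is that the formula survives for arbitrary $N$: writing $N$ as the filtered union of its finitely generated submodules $N'$, flatness makes each $N' \otimes_R S \hookrightarrow N \otimes_R S$ injective, and associated primes commute with such filtered unions, so the general formula follows from the finitely generated case.

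Granting the formula, the \emph{if} direction is immediate: each $S/\p S$ is a cyclic, hence finitely generated, module over the Noetherian ring $S$, so $\Ass_S(S/\p S)$ is finite, and a finite union of finite sets is finite. The \emph{only if} direction is where faithful flatness enters and is the main point. Here I would show that the indexing in the formula is \emph{disjoint and nonempty}: by faithful flatness $\p S \neq S$, so $S/\p S \neq 0$ and $\Ass_S(S/\p S) \neq \emptyset$; and every $\q \in \Ass_S(S/\p S)$ contracts to $\p$. Indeed $\q \supseteq \p S$ forces $\q \cap R \supseteq \p$, while if some $x \in (\q \cap R)\setminus \p$ existed then $x$ would be a nonzerodivisor on the domain $R/\p$, hence (by flatness) a nonzerodivisor on $S/\p S \cong (R/\p)\otimes_R S$, contradicting $\q = \func{Ann}_S(\bar t)$ for a nonzero $\bar t \in S/\p S$. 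Thus distinct $\p \in \Ass_R N$ produce disjoint nonempty subsets of $\Ass_S(N \otimes_R S)$, yielding the injection $\p \mapsto \Ass_S(S/\p S)$ and the bound $\# \Ass_R N \leq \# \Ass_S(N \otimes_R S)$; finiteness of the right-hand set then forces finiteness of $\Ass_R N$.

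I expect the main obstacle to be bookkeeping rather than conceptual: the cleanest route is to establish the contraction statement $\q \cap R = \p$ carefully, since it is exactly what makes the union disjoint and powers the \emph{only if} direction, and to be sure the base-change formula is applied in its general, not-necessarily-finitely-generated form, because local cohomology modules are rarely finitely generated.
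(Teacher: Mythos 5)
Your proposal is correct, but there is nothing in the paper to compare it against: the paper states Lemma 2.1 without proof, presenting it as a ``well known result'' about the behaviour of associated primes under faithfully flat extension. Your argument is the standard derivation one would supply, and it is sound at every step. The reduction to the module-theoretic statement via the flat base-change isomorphism $H^i_{IS}(M\otimes_R S)\cong H^i_I(M)\otimes_R S$ is valid (it follows from the \v{C}ech complex on a finite generating set of $I$, which uses the implicit Noetherian hypothesis, so it is worth saying that word); the formula $\Ass_S(N\otimes_R S)=\bigcup_{\p\in\Ass_R N}\Ass_S(S/\p S)$ is exactly Theorem 23.2(ii) of the Matsumura reference the paper itself cites, and your filtered-union reduction to the finitely generated case is the right precaution since $N=H^i_I(M)$ is rarely finitely generated (in Matsumura's formulation the module is in fact allowed to be arbitrary, so this step is a safeguard rather than a necessity, but it is correctly executed). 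You also correctly identified the two places where faithful (as opposed to mere) flatness is indispensable: $\p S\neq S$ so that $\Ass_S(S/\p S)\neq\emptyset$, and the contraction $\q\cap R=\p$ for $\q\in\Ass_S(S/\p S)$, proved via flatness preserving the injectivity of multiplication by $x\notin\p$ on $R/\p$. This disjointness-plus-nonemptiness argument is what upgrades the trivial \emph{if} direction to the \emph{only if} direction, and it closes the a priori possibility that an infinite $\Ass_R N$ could index a finite union through overlaps. In short: a complete and careful proof of a statement the paper leaves unproved.
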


Next, we state the following known result (see Theorem 1.2 of \cite{Nu1}). We need it to prove Proposition 3.1 in the next section.

\begin{thm}
Let $S$ be a regular commutative Noetherian local ring of mixed characteristic $p > 0$. Then, the set of associated primes
of $H^i_J (S)$ that do not contain $p$ is finite for every $i\in \NN$ and every ideal $J\subset S$.
\end{thm}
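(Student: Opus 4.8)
The plan is to reduce the mixed-characteristic statement to an equicharacteristic-zero one by inverting $p$, and then to invoke the known finiteness of associated primes for regular rings containing $\QQ$. First I would recall the standard behaviour of associated primes under localization: for the Noetherian ring $S$, an $S$-module $M$, and the multiplicative set $W=\{1,p,p^2,\dots\}$, there is a bijection
\[
\Ass_{S[1/p]}\bigl(W^{-1}M\bigr)=\{\,\p S[1/p]\ :\ \p\in\Ass_S M,\ p\notin\p\,\}.
\]
Applying this with $M=H^i_J(S)$, and using that local cohomology commutes with the flat base change $S\to S[1/p]$ so that $W^{-1}H^i_J(S)\cong H^i_{JS[1/p]}(S[1/p])$, the set of associated primes of $H^i_J(S)$ that do not contain $p$ is in bijection with $\Ass_{S[1/p]}H^i_{JS[1/p]}(S[1/p])$. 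Hence it suffices to show this latter set is finite.

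The second step is to identify the nature of $S[1/p]$. Since $S$ is local of mixed characteristic $p$, its residue field has characteristic $p$, so for every prime integer $\ell\neq p$ the image of $\ell$ in $S/\m$ is nonzero; thus $\ell\notin\m$ and $\ell$ is already a unit in $S$. Once we also invert $p$, every nonzero integer becomes invertible, so $\QQ\subseteq S[1/p]$. As a localization of the regular Noetherian local ring $S$, the ring $R:=S[1/p]$ is itself regular and Noetherian. The problem has therefore been converted into an equicharacteristic-zero statement: for the regular Noetherian ring $R$ containing $\QQ$ and the ideal $I:=JS[1/p]$, the set $\Ass_R H^i_I(R)$ is finite.

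The final step is to invoke finiteness of associated primes of local cohomology in the equicharacteristic-zero regular setting --- the circle of results coming from Lyubeznik's $D$-module methods, cf.\ \cite{Ly1}. \textbf{This is where the main difficulty lies:} the ring $R$ is in general neither local nor essentially of finite type over a field --- when $S$ is complete, $R$ has the shape $V[[x_1,\dots,x_n]][1/p]$ for a complete discrete valuation ring $V$ of mixed characteristic --- so the familiar local and affine versions of the characteristic-zero theorem do not apply verbatim. The route I would take is to verify finiteness at each maximal ideal $\n$ of $R$, where $R_{\n}$ is a regular local $\QQ$-algebra and the equicharacteristic-zero result does supply finitely many associated primes of the localized module, and then to control the passage from these local statements to the global set $\Ass_R H^i_I(R)$. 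The technical heart is precisely this local-to-global control: one wants to exhibit $H^i_I(R)$ as an object of finite length in a suitable category of $D$-modules over $R$, whose support $\Supp H^i_I(R)\subseteq\V(I)$ then forces only finitely many associated primes to occur. The explicit structure above, which makes the $D$-module analysis tractable, can be accessed by first passing to the completion $\widehat S$ (a power series ring over a complete discrete valuation ring by Cohen's theorem) and transferring finiteness through the faithfully flat Lemma~2.1.
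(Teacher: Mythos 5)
This statement is not proved in the paper at all: it is quoted verbatim as Theorem~1.2 of \cite{Nu1}, and the paper's role for it is purely as an imported tool (used at the end of Proposition~3.1 and Theorem~3.3). So the comparison is really against the argument in \cite{Nu1}. Your first two steps are correct and are in fact the opening moves of that argument: the bijection $\Ass_{S[1/p]}(W^{-1}M)=\{\p S[1/p]: \p\in\Ass_S M,\ p\notin\p\}$ is valid over a Noetherian ring even for non-finitely-generated $M$, local cohomology does commute with the localization $S\to S[1/p]$, and your observation that every prime $\ell\neq p$ is already a unit in $S$ (so $\QQ\subseteq S[1/p]$) is right. Passing to the completion via faithfully flat descent, the problem becomes finiteness of $\Ass_R H^i_I(R)$ for $R=V[[x_1,\dots,x_n]][1/p]$, $V$ a complete DVR of mixed characteristic, exactly as you say.

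The genuine gap is your third step, and you have correctly located but not closed it. Invoking the equicharacteristic-zero theorem at each maximal ideal $\n$ of $R$ does give that each $\Ass_{R_\n}H^i_{IR_\n}(R_\n)$ is finite (via completion of $R_\n$ and Lyubeznik's theory), but $R=V[[x_1,\dots,x_n]][1/p]$ has infinitely many maximal ideals, and $\Ass_R H^i_I(R)$ is the union over all $\n$ of the contractions of these finite sets; a union of infinitely many finite sets need not be finite, so no local-to-global argument of this shape can conclude without a uniform input. That uniform input is precisely the content of \cite{Nu1}: following Mebkhout--Narv\'aez-Macarro, the ring $V[[x_1,\dots,x_n]][1/p]$ is shown to be a ring of differentiable type, a Bernstein-style dimension inequality holds for modules over its ring of differential operators, the modules $H^i_I(R)$ are holonomic and hence of \emph{finite length} in that category, and a finite-length $D$-module has finitely many associated primes. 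You name this finite-length property as ``where the main difficulty lies'' and ``the technical heart,'' but you assert it as a hope rather than prove it --- and it is not a routine verification; it is the entire substance of the theorem. As written, your proposal is a correct reduction plus an accurate description of what remains, not a proof.
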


\section{main results}

We begin the section with the following proposition which is a new variant of the result of Theorem 2.1 of \cite{Bh} to prove the next theorem. The part of the proof this proposition is similar to that of Theorem 2.1 of \cite{Bh}.  

\begin{prop}
Let $S$ be a ramified regular local ring of mixed characteristic $p>0$ and let $J\subset S$ be an ideal. Suppose there exists an unramified regular local ring $R$ with the following properties:

(1) $S$ is an Eisenstein extension of $R$ by Eisenstein polynomial $f(X)\in R[X]$. 

(2) $I\subset R[X]$ is an ideal such that $IS= J$ and multiplication by $p$ on $H^{i}_{I}(R[X])$ is an isomorphism. 

(3) $p\notin \Ass_{\ZZ} H^{i+1}_{I}(R[X])$

Then $p\notin \Ass_S H^i_J(S)$ and $\Ass_S H^i_J(S)$ is finite. 
\end{prop}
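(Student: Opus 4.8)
The plan is to build everything from the defining short exact sequence of the Eisenstein extension and to track the action of multiplication by $p$ through it. Since $R$ is regular local it is a domain, so $R[X]$ is a domain and the monic Eisenstein polynomial $f(X)$ is a nonzerodivisor; hence property~(1) yields the short exact sequence of $R[X]$-modules
\[
0 \to R[X] \arrow{f} R[X] \to S \to 0 .
\]
Applying local cohomology with support in $I$, and using that $S = R[X]/(f)$ together with the independence (base change) property of local cohomology to identify $H^i_I(S) \cong H^i_{IS}(S) = H^i_J(S)$, I would obtain the long exact sequence
\[
\cdots \to H^i_I(R[X]) \arrow{f} H^i_I(R[X]) \to H^i_J(S) \arrow{\partial} H^{i+1}_I(R[X]) \arrow{f} H^{i+1}_I(R[X]) \to \cdots
\]
and extract from it the short exact sequence
\[
0 \to \Coker\big(f\colon H^i_I(R[X]) \to H^i_I(R[X])\big) \to H^i_J(S) \to \Ker\big(f\colon H^{i+1}_I(R[X]) \to H^{i+1}_I(R[X])\big) \to 0 .
\]

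Next I would reinterpret the two hypotheses as statements about multiplication by $p$. Property~(2) says precisely that multiplication by $p$ is an isomorphism on $H^i_I(R[X])$; since $p$ commutes with $f$, it is then also an isomorphism on $\Coker(f\colon H^i_I(R[X]) \to H^i_I(R[X]))$, in particular injective there. For property~(3), note that for a $\ZZ$-module $M$ one has $(p)\in\Ass_{\ZZ}M$ if and only if $M$ has a nonzero element killed by $p$; hence $p\notin\Ass_{\ZZ}H^{i+1}_I(R[X])$ is equivalent to saying that multiplication by $p$ is injective on $H^{i+1}_I(R[X])$, and therefore injective on its submodule $\Ker(f\colon H^{i+1}_I(R[X]) \to H^{i+1}_I(R[X]))$.

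With these two facts in hand, the core step is a short diagram chase. Multiplication by $p$ is an endomorphism of the short exact sequence above (it commutes with all the maps, as they are $R[X]$-linear), so applying the snake lemma — or directly chasing an element $x\in H^i_J(S)$ with $px=0$ — I conclude that multiplication by $p$ is injective on $H^i_J(S)$, since it is injective on both outer terms. Injectivity of $p$ on $H^i_J(S)$ means $0 :_{H^i_J(S)} p = 0$, i.e.\ $H^i_J(S)$ has no $p$-torsion; since any associated prime containing $p$ would be the annihilator of a nonzero $p$-torsion element, this gives $p\notin\Ass_S H^i_J(S)$, in the sense that no associated prime of $H^i_J(S)$ contains $p$.

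Finally, to obtain finiteness I would invoke Theorem~2.2: as $S$ is a regular local ring of mixed characteristic $p$, the set of associated primes of $H^i_J(S)$ that do not contain $p$ is finite. Combining this with the fact just proved — that there are no associated primes containing $p$ — shows that $\Ass_S H^i_J(S)$ is finite. The step I expect to require the most care is the bookkeeping that makes the diagram chase legitimate: verifying that the identification $H^i_I(S)\cong H^i_J(S)$ is compatible with the $p$-action and with the connecting maps $\partial$, so that multiplication by $p$ genuinely acts as an endomorphism of the full long exact sequence; the translation of the $\Ass_{\ZZ}$ hypothesis in~(3) into plain injectivity of $p$ is the other point that must be pinned down precisely.
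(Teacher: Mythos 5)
Your proof is correct, but it takes a genuinely different and leaner route than the paper. The paper never works directly with the short exact sequence $0 \to R[X] \arrow{f} R[X] \to S \to 0$ alone; instead it first verifies that $(p,f)$ and $(f,p)$ are $R[X]$-regular sequences, builds the full $3\times 3$ commutative diagram of short exact sequences on $R'=R[X]$ modulo $p$, $f$, and $(p,f)$, and passes to the induced two-dimensional diagram of long exact sequences. Its key intermediate step is the vanishing $H^i_I(R'/pR')=0$, obtained from hypotheses (2) and (3) via the $p$-long exact sequence (reading (3), as you do, as injectivity of $p$ on $H^{i+1}_I(R[X])$); this vanishing makes the map $\phi_{i-1}$ surjective, and a diagram chase starting from $x\in\Ker\big(p\colon H^i_I(R'/fR')\to H^i_I(R'/fR')\big)$ then shows $x=0$. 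You instead extract from the single $f$-long exact sequence the short exact sequence $0 \to \Coker\big(f|_{H^i_I(R[X])}\big) \to H^i_J(S) \to \Ker\big(f|_{H^{i+1}_I(R[X])}\big) \to 0$ and apply the snake-lemma observation that $p$ is injective on both outer terms — invertible on the cokernel by (2) (note your chase there really uses both injectivity and surjectivity of $p$ on $H^i_I(R[X])$, so the full isomorphism hypothesis is needed, just as in the paper), injective on the kernel by (3). Both arguments establish the same key claim, that $p$ is a nonzerodivisor on $H^i_J(S)$, and both conclude identically via Theorem~1.2 of \cite{Nu1}, using the same base-change identification $H^i_I(R[X]/fR[X])\cong H^i_J(S)$. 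What your version buys is brevity: you avoid the regular-sequence verifications, the $3\times 3$ diagram, and the mod-$p$ vanishing entirely. What the paper's version buys is reusable machinery: the same double diagram and chase are deployed again verbatim in the proof of Theorem~3.3 (with $f=h-p$), where the mod-$p$ reduction $H^i_{I_\lambda}(R/pR)=H^i_{I_\lambda}(R)/pH^i_{I_\lambda}(R)$ plays an independent role, so the heavier setup is an investment the paper amortizes. Your two flagged points of care — that multiplication by $p$ is an endomorphism of the whole long exact sequence (clear, since all maps including $\partial$ are $R[X]$-linear and $p\in R[X]$), and that $p\ZZ\in\Ass_{\ZZ}M$ iff $M$ has nonzero $p$-torsion (true because $p\ZZ$ is maximal) — both check out.
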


\begin{proof}
Here $p$ is an $R[X]$-regular element. So, for short exact sequence $$0\rightarrow R[X]\stackrel{p}{\rightarrow} R[X]\rightarrow R[X]/pR[X]\rightarrow 0$$ we get the following long exact sequence of local cohomologies $$..\rightarrow H^{i-1}_{I}(R[X]/pR[X])\stackrel{\delta}\rightarrow H^{i}_{I}(R[X])\stackrel{p}{\rightarrow}H^{i}_{I}(R[X])\rightarrow H^{i}_{I}(R[X]/pR[X]) \rightarrow ..$$ Since $p$ is a non zero divisor of both $H^i_{I} (R[X])$ and $H^{i+1}_{I}(R[X])$, from above long exact sequence we get that, $H^i_{I} (R[X]/pR[X])= H^i_{I}(R[X])/pH^i_{I} (R[X])$. Moreover, since multiplication by $p$ on $H^{i}_{I}(R[X])$ is an isomorphism, $H^i_{I} (R[X]/pR[X])= H^i_{I} (R[X])/pH^i_{I} (R[X])= 0$.

Here $S=R[X]/f(X)$ where $(R,\m)$ is an unramified regular local ring and Eisenstein polynomial $f(X)$ is of the form $f(X)=X^n+ a_1X^{n-1}+\ldots +a_n$ with $a_i\in \m$ for every $i\geq 0$ and $a_n\notin {\m}^2$. At first, we claim that$(p,f(X))$ is an $R[X]$-regular sequence. To see this, let $0\neq\overline{g(X)}=\overline{r_0} + \overline{r_1}X +\ldots +\overline{r_t}X^t\in (R/p)[X]$ with $\overline{r_t}\neq 0$. Now $f(X)\overline{g(X)}=\overline{f(X)}\overline{g(X)}=0$ implies $\overline{r_t}X^{t+n}=0$. Thus $\overline{r_t}=0$ and we have a contradiction. Thus $(p, f(X))$ is a $R[X]$-regular sequence. Using (b) of (12.2) Discussion of \cite{Ho} we can see that $(f(X),p)$ is also $R[X]$-regular sequences. 
Set $R[X]=R'$ and $f(X)=f$. 

Consider the following diagram of short exact sequences where every row and column is exact.

\[
\CD
@. 0@. 0@. 0@. @. \\
@. @VVV @VVV @VVV @.\\
0 @>>>R'@>f>>R'@>>>R'/fR'@>>>0 @.\\
@. @Vp VV @VVp V @VVp V @. \\
0 @>>>R'@>f>>R'@>>>R'/fR'@>>>0 @.\\
@. @VVV @VVV @VVV @. \\
0@>>>R'/pR'@>f>>R/pR'@>>>R'/(p,f)R'@>>>0 @.\\
@. @VVV @VVV @VVV @.\\
@. 0@. 0@. 0@. @. 
\endCD
\]

This above diagram yields the following diagram of long exact sequences where all rows and columns are exact.

\[
\CD
@. @. @. @. @. H^i_{I}(R')@.\\
@.@VVV @VVV @VVV @VVV @Vp VV \\
@. H^{i-1}_{I}(R') @>f>>H^{i-1}_{I}(R')@>\psi_{i-1}>>H^{i-1}_{I}(R'/fR')@>>>H^i_{I}(R')@>f>>H^i_{I}(R')@.\\
@. @V\pi_{i-1} VV @V\pi_{i-1} VV @V\alpha_{i-1} VV @V\pi_{i} VV @V\pi_{i} VV\\
@. H^{i-1}_{I}(R'/pR') @>f>>H^{i-1}_{I}(R'/pR')@>\phi_{i-1}>>H^{i-1}_{I}((R'/(p,f)R')@>>>H^i_{I}(R'/pR')@>f>>H^i_{I}(R'/pR')@.\\
@. @VVV @VVV @VVV @VVV @. \\
@. H^{i}_{I}(R') @>f>>H^{i}_{I}(R')@>>>H^{i}_{I}(R'/fR')@>>>H^{i+1}_{I}(R') @. @.\\
@. @Vp VV @Vp VV @Vp VV @Vp VV @. \\
@. H^{i}_{I}(R') @>f>>H^{i}_{I}(R')@>>>H^{i}_{I}(R'/fR')@>>>H^{i+1}_{I}(R') @. @.\\
@. @VVV @VVV @VVV @VVV @. 
\endCD
\]

From first paragraph of the proof, we already have $H^i_{I} (R'/pR')= 0$. Thus in the above diagram $\phi_{i-1}$ is surjective. Let $x$ be in $\Ker{(H^{i}_{I}(R'/fR')\stackrel{p}{\rightarrow}H^{i}_{I}(R'/fR'))}$. Then there exists some $y\in H^{i-1}_{I}(R'/(p, f)R')$ such that $x$ is the image of $y$ under the map $H^{i-1}_{I}(R'/(p, f)R')\rightarrow H^{i}_{I}(R'/fR')$. Since $\phi_{i-1}$ is surjective, $y= \phi_{i-1}(z)$ for some $z\in H^{i-1}_{I}(R'/pR')$. Now, if we come down to $H^{i}_{I}(R')$ via the map $H^{i-1}_{I}(R'/pR')\rightarrow H^{i}_{I}(R')$, due to injectivity of the multiplication map by $p$, the image of $z$ in $H^{i}_{I}(R')$ is zero. Since every square in the diagram is commutative, we get that $x= 0$. So, $p$ is a nonzero divisor of $H^{i}_{I}(R'/fR')= H^{i}_{IR[X]}(R[X]/fR[X])=H^{i}_{I(R[X]/fR[X])}(R[X]/fR[X])=H^{i}_{J}(S)$. Now using Theorem 1.2 of \cite{Nu1} we conclude. 



\end{proof}

Now, we state our first main result.

\begin{thm}
Let $S$ be a ramified regular local ring of dimension $d$ of mixed characteristic $p> 0$ and consider a set of regular system of parameters where $p$ is not in the ideal generated by any $d-1$ of them. Let $J\subset S$ be an ideal generated by monomials formed out of $d-1$ such regular system of parameters. Assume that, for a polynomial ring in $d-1$ indeterminates over the prime field of characteristic $p$, monomials similar to $J$ which are now formed out of $d-1$ indeterminates, generates a perfect ideal. Then for every $i\geq 0$ except $i= \height{J}+1, \height{J}+2$, $\Ass_{S} H^{i}_{J}(S)$ is finite.
\end{thm}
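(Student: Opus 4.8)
The plan is to reduce Theorem 3.2 to an application of Proposition 3.1 by constructing, for the given ramified regular local ring $S$, an unramified regular local ring $R$ and an Eisenstein extension realizing $S = R[X]/f(X)$, together with an ideal $I \subset R[X]$ with $IS = J$. Since $S$ is ramified of mixed characteristic $p$ and dimension $d$, by the structure theory (Cohen structure theorem as used in \cite{Bh}) there is an unramified regular local ring $(R,\m)$ and an Eisenstein polynomial $f(X) \in R[X]$ with $S \cong R[X]/f(X)$. The crucial choice is of the regular system of parameters: since $p$ lies in no ideal generated by $d-1$ of the chosen parameters, I can arrange that $p$ plays the role of the $d$-th parameter (the one encoded in the Eisenstein equation), while the remaining $d-1$ parameters, call them $x_1,\dots,x_{d-1}$, lift to a regular system of parameters together with $X$ in the unramified ring $R[X]$. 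Then the monomial generators of $J$ in the $x_i$ lift to the same monomials $I = (\text{monomials in } x_1,\dots,x_{d-1}) \subset R[X]$, and by construction $IS = J$.

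The heart of the argument is then to verify hypotheses (2) and (3) of Proposition 3.1 for the range of indices $i \neq \height{J}+1, \height{J}+2$. The key observation is that, because $I$ is generated by monomials in a regular sequence $x_1,\dots,x_{d-1}$ of $R[X]$ that does not involve $p$, the local cohomology $H^i_I(R[X])$ depends only on the combinatorics of these monomials and can be computed from the analogous squarefree/monomial situation over the prime field. Concretely, I would use the fact that $R[X]/(x_1,\dots,x_{d-1})$ is again regular and that the ideal $\beta$ generated by the corresponding monomials in the polynomial ring in $d-1$ indeterminates over the prime field $\mathbb{F}_p$ is \emph{perfect}. Perfection of $\beta$ means the quotient is Cohen--Macaulay, so $H^i_\beta$ vanishes except in a single cohomological degree equal to $\height{\beta} = \height{J}$. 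Transporting this vanishing back to $R[X]$ (via base change / flatness, using Lemma 2.1 and the fact that the monomial generators behave uniformly) forces $H^i_I(R[X]) = 0$ for all $i$ outside a small set around $\height{J}$, and for those vanishing degrees conditions (2) and (3) are automatic.

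For the surviving nonzero local cohomology I must check that multiplication by $p$ is an isomorphism on $H^i_I(R[X])$ and that $p \notin \Ass_{\ZZ} H^{i+1}_I(R[X])$. Here the point is that $p$ is a nonzerodivisor modulo $(x_1,\dots,x_{d-1})$ and $I$ involves no $p$: the module $H^i_I(R[X])$ is, up to the relevant degree, supported away from $p$ in the sense that $p$ acts invertibly. More precisely, since $R$ is unramified, $p, x_1, \dots, x_{d-1}, X$ form part of a regular system of parameters of $R[X]$, and $I$ is generated using only $x_1,\dots,x_{d-1}$; the ring $R[X]$ is then a localization/completion situation in which $p$ is a unit on the $I$-torsion in the relevant degrees. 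The excluded degrees $\height{J}+1$ and $\height{J}+2$ are exactly where the isomorphism-by-$p$ or the $\Ass_{\ZZ}$ condition can fail, which is why the theorem omits them. Thus for every $i \geq 0$ with $i \neq \height{J}+1, \height{J}+2$, Proposition 3.1 applies and yields that $\Ass_S H^i_J(S)$ is finite.

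I expect the main obstacle to be precisely the verification of condition (2), namely that multiplication by $p$ is an isomorphism on $H^i_I(R[X])$ in the nonexcluded degrees. This requires controlling how the perfect monomial ideal $\beta$ over $\mathbb{F}_p$ lifts through the mixed-characteristic ring $R[X]$ and pinning down exactly which cohomological degrees can carry $p$-torsion or fail surjectivity of multiplication by $p$; the two excluded values $\height{J}+1, \height{J}+2$ strongly suggest that a long exact sequence comparing $H^\bullet_I(R[X])$ with $H^\bullet_\beta$ (or with the characteristic-$p$ model) introduces a degree shift, and the careful bookkeeping of that shift — together with confirming $p \notin \Ass_{\ZZ}$ of the next cohomology — is where the real work lies.
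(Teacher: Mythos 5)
Your overall skeleton does match the paper's proof: complete $S$, realize $S=R[X]/f(X)$ with $R=R_0[[x_2,\ldots,x_d]]$ unramified (coefficient ring plus the $d-1$ chosen parameters, with $p$ and the Eisenstein equation accounting for the remaining directions), lift the monomial generators to an ideal of $R[X]$, and invoke Proposition 3.1. But your central step is wrong as stated: you claim that perfection of $\beta$ over $\ZZ/p\ZZ$ ``forces $H^i_I(R[X])=0$ for all $i$ outside a small set around $\height{J}$,'' and that conditions (2) and (3) of Proposition 3.1 are then ``automatic.'' Perfection gives no vanishing of the mixed-characteristic local cohomology itself. What it gives --- and what the paper actually uses --- is vanishing on the characteristic-$p$ fiber: by generic perfection (Proposition 19 of \cite{HE}) the ideal $\alpha(T/pT)$ stays perfect in $T/pT=(R_0/pR_0)[x_2,\ldots,x_d]$, so $T/(p+\alpha)T$ is Cohen--Macaulay, and by Peskine--Szpiro (\cite{PS}) $H^i_{\alpha}(T/pT)=0$ for all $i\neq \height{\alpha}$. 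Feeding this into the long exact sequence of $0\rightarrow T\stackrel{p}{\rightarrow}T\rightarrow T/pT\rightarrow 0$ yields that multiplication by $p$ on $H^i_{\alpha}(T)$ is an \emph{isomorphism} for $i\neq \height{\alpha},\height{\alpha}+1$ --- the modules themselves may well be nonzero in many degrees --- and this is transported to $H^i_{\alpha R[X]}(R[X])$ by faithfully flat base change $T\rightarrow R[X]$ together with a separate verification that grade is preserved under faithfully flat extension. Your alternative justification (``$p$ is a unit on the $I$-torsion in the relevant degrees'') is not an argument; it is a restatement of condition (2), which is exactly the thing to be proven, and the mod-$p$ vanishing chain above is the only mechanism available.

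The second gap you concede yourself: you never identify the excluded degrees, writing that the ``careful bookkeeping of that shift\ldots is where the real work lies.'' That bookkeeping is a substantial fraction of the paper's proof, not a routine epilogue. The Proposition 3.1 argument a priori excludes $i=\height{\alpha R[X]}$ and $\height{\alpha R[X]}+1$, and to convert these into $\height{J}+1$ and $\height{J}+2$ the paper proves two nontrivial facts: first, $\height{(J\cap R[X])}=\height{J}+1$ by a dimension count through $R[X]_{\m}$ using $\height{f(X)R[X]_{\m}}=1$; second --- and this is the delicate point --- an equality of grades $\grade{(\alpha R[X], R[X])}=\grade{(J\cap R[X], R[X])}$, argued via a comparison of maximal regular sequences after passing to $S$. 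Note that this equality is genuinely subtle, since $\alpha R[X]$ and $J\cap R[X]=\alpha R[X]+f(X)R[X]$ have different radicals (the paper's Remark shows $f(X)\notin\rad{(\alpha R[X])}$), so nothing formal identifies their grades; any complete proof of the theorem must confront this tension head-on, and your proposal does not engage with it at all. Without that identification you have only proved finiteness of $\Ass_S H^i_J(S)$ outside the window $\{\height{\alpha R[X]},\height{\alpha R[X]}+1\}$, which is not the statement of the theorem.
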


\begin{proof}
Since completion is a faithfully flat extension, using Lemma 2.1, it is sufficient to prove the theorem for complete ramified regular local ring. So, we can assume that $S$ is complete. Fix a regular system of parameters $x_1, x_2,\ldots, x_d$ in $S$. Let $J$ be an ideal generated by $d-1$ number of these regular system of parameters $x_2,\ldots, x_d$. 
Let $R_0$ be its coefficient ring. From hypothesis, we can get a system of parameters $p, x_2,\ldots, x_d$ in $S$. It turns out that $R_0[[x_2,\ldots, x_d]]$ is an unramified regular local ring where $x_2,\ldots, x_d$ are algebraically independent on $R_0$, i.e. we can think $R_0[[x_2,\ldots, x_d]]$ as power series ring in $d-1$ variables (see part of the proof of (iii) of Theorem 29.4 and (ii) of Theorem 29.8 of \cite{CRTofM}). Set $R= R_0[[x_2,\ldots, x_d]]$ and there exists Eisenstein polynomial $f(X)$, (which is actually minimal polynomial of $x_1$ over $R$) such that $S= R[X]/f(X)$ (see proof of (ii) of Theorem 29.8 of \cite{CRTofM}). 

Set $I= J\cap R[X]$. At first we would like to show that $\height{I}- \height{J}= 1$ and for that we can proceed as follows: Since $S= R[X]/f(X)$, there exists a maximal ideal $\m \in R[X]$ such that $S= R[X]/f(X)= R[X]_{\m}/f(X)R[X]_{\m}$. Since $I= J\cap R[X]$, we get $R[X]/I\cong S/J$. This gives $R[X]/I$ is local and $I\subset \m$. Moreover we have $f(X)\in I$. Since $S$ is regular local ring, $\dim{S}= \height{J}+ \dim{S/J}$. In the similar way, $\dim{R[X]_{\m}}= \height{IR[X]_{\m}}+ \dim{R[X]_{\m}/IR[X]_{\m}}$. Now every minimal primes over $I$ is inside $\m$, otherwise $R[X]/I$ should have more than one maximal ideal. This gives $\height{IR[X]_{\m}}= \height{I}$ and our last equation becomes $\dim{R[X]_{\m}}= \height{I}+ \dim{R[X]/I}$. On the other hand $\dim{R[X]_{\m}}= \height{f(X)R[X]_{\m}}+ \dim{R[X]_{\m}/f(X)R[X]_{\m}}$. Since $R[X]_{\m}$ is a domain, $\height{f(X)R[X]_{\m}}= 1$. Also we get $\dim{S}= \dim{R[X]/f(X)}= \dim{R[X]_{\m}/f(X)R[X]_{\m}}= \dim{R[X]_{\m}}- \height{f(X)R[X]_{\m}}$. Combining above results we get, $\dim{R[X]_{\m}}- \dim{S}= \height{I}- \height{J}= 1$.

Set $R_0[x_2,\ldots, x_d]= T$. Consider the ideal $\alpha\subset T$ such that $\alpha$ is generated by monomials similar to those which generates $J$. Since $p$ is an $T$-regular element, for short exact sequence $$0\rightarrow T\stackrel{p}{\rightarrow} T\rightarrow T/pT\rightarrow 0$$ we get the following long exact sequence of local cohomologies $$..H^{i-1}_{\alpha}(T/pT)\stackrel{\delta}\rightarrow H^{i}_{\alpha}(T)\stackrel{p}{\rightarrow}H^{i}_{\alpha}(T)\rightarrow H^{i}_{\alpha}(T/pT) \rightarrow ..$$ It is to be noted that, we have the natural map $T= R_0[x_2,\ldots, x_d]\rightarrow R_0[[x_2,\ldots, x_d]][X]=R[X]\rightarrow S$, such that $\alpha S= J$ where $\alpha R[X]\subset I$. 

Consider polynomial ring $(\ZZ/p\ZZ)[x_2,\ldots,x_d]$ of $d-1$ variables over prime field $\ZZ/p\ZZ$ and let $\beta$ be the ideal generated by monomials similar to those which generates $J$. From assumption, $\beta$ is perfect. Set $K= R_0/pR_0$. Clearly $(\ZZ/p\ZZ)[x_2,\ldots,x_d]\rightarrow (R_0/pR_0)[x_2,\ldots,x_d]= T/pT$ is faithfully flat extension where $(\alpha)(T/pT)= (\beta)(T/pT)$. It is easy to see that due to flatness $(\beta)(T/pT)$ i.e. $(\alpha)(T/pT)$ is perfect ideal in $T/pT$. Now From Proposition 19 of \cite{HE}, we get $T/(p+ \alpha)T$ is Cohen-Macaulay. From Proposition 4.1 of section III of \cite{PS}, we get $H^i_{\alpha}(T/pT)= 0$ for every $i\geq 0$, except $i= \height{\alpha}$. This gives that multiplication by $p$ on $H^i_{\alpha}(T)$ is an isomorphism for every $i\geq 0$ except $i= \height{\alpha}, \height{\alpha}+1$.

Next, we observe the following: For faithfully flat extension of Noetherian rings $A\rightarrow B$, let $\alpha$ be an ideal of $A$. Then $\grade{(\alpha, A)}= \grade{(\alpha B, B)}$. Let $a_1,\ldots,a_n$ be the maximal length of $A$-regular sequence which is inside $\alpha$. It is easy to observe $\grade{(\alpha, A)}\leq \grade{(\alpha B, B)}$. Let there be an element $a_{n+1} b\in \alpha B$ such that $a_1,\ldots,a_n, a_{n+1} b$ is a $B$-regular sequence in $\alpha B$. Let $a_{n+1}a\in (a_1,\ldots,a_n)$. This implies $a_{n+1}ba\in (a_1,\ldots,a_n)B$. This gives $a\in (a_1,\ldots,a_n)B\cap A= (a_1,\ldots,a_n)$. Moreover $A/(a_1,\ldots,a_{n+1})A\neq 0$. Thus $a_1,\ldots,a_{n+1}$ be the maximal length of $A$-regular sequence which is inside $\alpha$, but this is a contradiction. Thus $\grade{(\alpha, A)}= \grade{(\alpha B, B)}$.

Now, $T\rightarrow R[X]$ is faithfully flat extension. So tensoring with last long exact sequence of homology with $R[X]$ we get, $$..H^{i-1}_{\alpha R[X]}(R[X]/pR[X])\stackrel{\delta}\rightarrow H^{i}_{\alpha R[X]}(R[X])\stackrel{p}{\rightarrow}H^{i}_{\alpha R[X]}(R[X])\rightarrow H^{i}_{\alpha R[X]}(R[X]/pR[X]) \rightarrow ..$$ Moreover using results of above paragraph we get $\grade{(\alpha, T)}= \grade{(\alpha R[X], R[X])}$. Since we are in Cohen-Macaulay ring, height and grade of any ideal coincides. Thus we get, $H^i_{\alpha R[X]}(R[X]/pR[X])= 0$ for every $i\geq 0$, except $i= \height{\alpha R[X]}$. Moreover this gives that multiplication by $p$ on $H^i_{\alpha R[X]}(R[X])$ is an isomorphism every $i\geq 0$ except $i= \height{\alpha R[X]}, \height{\alpha R[X]}+1$.

Here $\alpha R[X]\subset R[X]$ such that $(\alpha R[X])S= J$. From above paragraph of the proof we have that multiplication by $p$ on $H^i_{\alpha R[X]}(R[X])$ is an isomorphism every $i\geq 0$ except $i= \height{\alpha R[X]}, \height{\alpha R[X]}+1$. Moreover from long exact sequence we get $p$ is a nonzero divisor of $H^{i+1}_{\alpha R[X]}(R[X])$ for every $i\geq 0$ except $i= \height{\alpha R[X]}, \height{\alpha R[X]}+1$. Thus from Proposition 3.1 we get that, for every $i\geq 0$ except $i= \height{\alpha R[X]}, \height{\alpha R[X]}+1$, $\Ass_S H^i_J(S)$ is finite.

To finish the proof, we need to show that $\grade (\alpha R[X], R[X])= \grade (I, R[X])$. We argue in the following way: Here $R[X]/f(X)$ is local, so there exists only one maximal ideal $\m$ which contains $f(X)$ and every primes which contains $f(X)$. Same is true for $I$. Thus $\grade (\alpha R[X], R[X])= \grade (\alpha R[X]_{\m}, R[X]_{\m})$ and similar is true for $I$. So we can pass to local ring $R[X]_{\m}$. Let $\grade (\alpha R[X]_{\m}, R[X]_{\m})= s$ and $\grade (IR[X]_{\m}, R[X]_{\m})= r$. Assume there exist two sets of sequences of maximal $R[X]_{\m}$-regular elements $\{z_1, z_2,\ldots, z_s\}$ and $\{f(X), y_2,\ldots, y_r\}$ in ideals $\alpha R[X]_{\m}$ and $IR[X]_{\m}$ respectively. We can change the regular sequence $\{z_1, z_2,\ldots, z_s\}$ to $\{f(X), z_2,\ldots, z_s\}$ where $f(X)$ may not be inside $\alpha R[X]_{\m}$. Passing to the ring $S= R[X]_{\m}/f(X)R[X]_{\m}$ we find that, $\alpha R[X]_{\m}S= IR[X]_{\m}S= J$ and we get two sets of $R[X]_{\m}/f(X)R[X]_{\m}$-regular sequences $\{\bar{z_2},\ldots, \bar{z_s}\}$ and $\{\bar{y_2},\ldots, \bar{y_r}\}$ in $J$. We claim that both are maximal. 

To prove this claim, let there be $y_{r+1}\in IR[X]_{\m}\subset R[X]_{\m}$ such that $\bar{y_{r+1}}$ is $S/(\bar{y_2},\ldots, \bar{y_r})S$-regular element. For $z\in R[X]_{\m}$, let $y_{r+1}z\in (f(X), y_2,\ldots,y_r)R[X]_{\m}$. This gives $\bar{y_{r+1}}\bar{z}\in (\bar{y_2},\ldots,\bar{y_r})S$ which in turn implies that $z\in (f(X), y_2,\ldots,y_r)R[X]_{\m}$. This contradicts the fact that $\{f(X), y_2,\ldots, y_r\}$ is a maximal $R[X]_{\m}$-regular sequence in $IR[X]_{\m}$. To see that $\{\bar{z_2},\ldots, \bar{z_s}\}$ is a maximal $S$-regular sequence, let there exists $z_{s+1}= a + bf(X)$ with $a\in \alpha R[X]$ and $b\notin f(X)R[X]$, such that $\bar{z_{s+1}}$ is $S/(\bar{z_2},\ldots, \bar{z_s})S$-regular sequence. At first, we observe that $a$ can not be zero. Otherwise, $f(X)$ and $bf(X)$ becomes part of regular sequence and $bf(X).1\in f(X)R[X]_{\m}$ but $1\notin f(X)R[X]_{\m}$. For some $z\in R[X]_{\m}$, assume $az\in (f(X), z_2,\ldots, z_s)R[X]_{\m}$. This gives $(a+ bf(X))z\in (f(X), z_2,\ldots, z_s)$. Thus passing to the ring $S$, we get $\bar{z}\in (\bar{z_2},\ldots, \bar{z_s})S$. Thus $z\in (f(X), z_2,\ldots, z_s)R[X]_{\m}$. Thus $f(X), z_2,\ldots, z_s, a$ forms $R[X]_{\m}$-regular sequence. Again replacing $f(X)$ by $z_1$, we get $(z_1, z_2,\ldots, z_s, a) R[X]\subset\alpha R[X]$ forms $R[X]$-regular sequence which contradicts the maximality of $R[X]$-regular sequence $z_1, z_2,\ldots, z_s$ in $\alpha R[X]_{\m}$. Thus both the sets of $R/f(X)$-regular sequences $\bar{z_2},\ldots, \bar{z_s}$ and $\bar{y_2},\ldots, \bar{y_r}$ in $J$ are maximal and $r= s$. Thus $\height{\alpha R[X]}= \height{I}$. 

From the second paragraph of the proof, we notice that $\height{I}= \height{J}+ 1$. Thus for every $i\geq 0$, except $i= \height{J}+1, \height{J}+2$, $\Ass_S H^i_J(S)$ is finite. Thus we conclude.
\end{proof}

We observe our next important result as an application of above theorem in the following corollary.

\begin{cor}
Consider ramified regular local ring $S$ of dimension $d$ of mixed characteristic $p> 0$ and let there be positive integers $m$ and $n$ such that $d-1= mn$. Consider a set of regular system of parameters where $p$ is not in the ideal generated by any $d-1$ of them. If $J$ is an ideal, generated by size $t$ minors of $m\times n$ matrix formed by $d-1$ such regular system of parameters, then for every $i\geq 0$, except $i= \height{J}+1, \height{J}+2$, $\Ass_{S} H^{i}_{J}(S)$ is finite.
\end{cor}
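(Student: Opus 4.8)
The plan is to reduce the corollary to a direct application of Theorem~3.2. The only hypothesis of Theorem~3.2 that is not immediately given in the statement of the corollary is the perfectness assumption: namely, that when we form the ``generic'' ideal in a polynomial ring over the prime field $\ZZ/p\ZZ$ using the same monomial/minor pattern, that ideal is perfect. So the entire content of the proof is to verify that the determinantal ideal built from generic indeterminates is perfect, and then to quote Theorem~3.2 verbatim.

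First I would set up the translation. We are given a ramified regular local ring $S$ of dimension $d$ with $d-1 = mn$, and a regular system of parameters in which $p$ is not in the ideal generated by any $d-1$ of them. We arrange the $d-1$ chosen parameters (the ones not involving $p$) into an $m\times n$ matrix, and let $J$ be the ideal of size-$t$ minors of this matrix. The corresponding object over the prime field is the ideal $I_t(\mathbf{X})$ of size-$t$ minors of the generic $m\times n$ matrix $\mathbf{X} = (X_{ij})$ of indeterminates in the polynomial ring $(\ZZ/p\ZZ)[X_{11},\ldots,X_{mn}]$, which has exactly $mn = d-1$ variables. Thus $I_t(\mathbf{X})$ plays the role of the ideal $\beta$ in Theorem~3.2.

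The key step is to invoke the classical perfectness theorem for generic determinantal ideals. By the theorem of Eagon and Northcott (and the sharper grade computation of Eagon--Hochster), the ideal $I_t(\mathbf{X})$ of $t\times t$ minors of a generic $m \times n$ matrix over any field (in particular over $\ZZ/p\ZZ$) is a perfect ideal: its grade equals $(m-t+1)(n-t+1)$ and this coincides with the projective dimension of the quotient, so $\ZZ/p\ZZ[\mathbf{X}]/I_t(\mathbf{X})$ is Cohen--Macaulay. Equivalently, the Eagon--Northcott complex (when $t$ is maximal) or the appropriate Buchsbaum--Rim/Lascoux-type resolution furnishes a length-$(m-t+1)(n-t+1)$ free resolution whose length matches the grade, which is precisely the definition of perfect. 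Since the minors of the matrix of regular parameters are, monomial-for-monomial, the specializations of the minors $I_t(\mathbf{X})$ under the substitution $X_{ij}\mapsto$ (corresponding parameter), the ideal $J$ is of the monomial-generated type required by Theorem~3.2, and its associated generic ideal $\beta = I_t(\mathbf{X})$ is perfect.

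With perfectness established, the remaining hypotheses of Theorem~3.2 are exactly the standing hypotheses of the corollary: $S$ is ramified regular local of dimension $d$, we have a regular system of parameters with $p$ avoiding every $(d-1)$-generated subideal, and $J$ is generated by expressions (here minors, which are polynomials in the parameters, i.e.\ combinations of monomials) formed out of $d-1$ of those parameters. Applying Theorem~3.2 then yields directly that $\Ass_S H^i_J(S)$ is finite for every $i\geq 0$ except possibly $i = \height{J}+1$ and $i = \height{J}+2$, which is the assertion of the corollary. I expect the only genuine obstacle to be confirming that a minor (which is a sum of several monomials rather than a single monomial) still falls under the ``monomials formed out of the parameters'' framework of Theorem~3.2; this is a matter of checking that the theorem's proof uses only that $\beta$ is generated by the specialized pattern and is perfect, both of which hold here, so the generators being multi-term determinants rather than single monomials causes no difficulty.
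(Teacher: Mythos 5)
Your proposal follows essentially the same route as the paper: the paper's proof likewise passes to the polynomial ring in the generic $m\times n$ matrix of indeterminates, quotes Hochster--Eagon \cite{HE} for the perfection of the determinantal ideal $I_t(X)$, and then declares the corollary immediate from Theorem 3.2. Your closing observation---that minors are sums of monomials rather than single monomials, so one must check that the proof of Theorem 3.2 really only uses perfection of the corresponding generic ideal---is a genuine subtlety that the paper's two-line proof passes over silently, and your resolution of it is the correct reading of how the theorem is being applied.
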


\begin{proof}
Consider polynomial ring $K[\{X_{ij}:m\geq i\geq 1, m\geq i\geq 1\}]= K[X_{ij}]$ (for brevity) over field $K$, in matrix of indeterminates and let $I_t (X)K[X_{ij}]$ is an ideal generated by size $t$ minor of matrix $(X_{ij})$. From \cite{HE}, we get that for $I_t (X)K[X_{ij}]$ is perfect and result is then immediate from above theorem. 

\end{proof}

\begin{rem}
In Theorem 3.2 above, it is to be noted that minimal polynomial $f(X)$ of $x_1$ in the ring $R[X]$, is not even inside the radical of $\alpha R[X]$. Let $f(X)= X^n+ b_1 X^{n-1} +\ldots+ b_n$. Raising to a power of integer $t$, we get $(f(X))^t= X^{nt}+\dots+ b^t_n$. If $(f(X))^t\in \alpha R[X]$ then $1\in \alpha$, which is a contradiction. 
\end{rem}

\begin{example}
Here, we give examples of local cohomology modules such that $p$ is a nonzero divisor of it, or more generally, multiplication by $p$ on local cohomology modules, is an isomorphism. 

(a) In section 4 of \cite{Bh}, there are examples of local cohomology modules such that $p$ is a nonzero divisor of it.

(b) In the proof of Theorem 3.2, we have already seen certain examples of ideals, so that multiplication by $p$, on the local cohomology modules with support in those ideals, is an isomorphism. 

(c) Here we present another source of examples such that upon multiplication by $p$, on the local cohomology modules, is an isomorphism: Consider an ideal $J\subset pS$. Since $p$ is an $S$-regular element, for short exact sequence $$0\rightarrow S\stackrel{p}{\rightarrow} S\rightarrow S/pS\rightarrow 0$$ we get the following long exact sequence of local cohomologies $$..\rightarrow H^{i-1}_{J}(S/pS)\stackrel{\delta}\rightarrow H^{i}_{J}(S)\stackrel{p}{\rightarrow}H^{i}_{J}(S)\rightarrow H^{i}_{J}(S/pS) \rightarrow ..$$ Since expansion of $J$ in $S/pS$ is a zero ideal, for every $i\geq 1$, $H^{i}_{J}(S/pS)= 0$. Thus, above long exact sequence reduces to $$0\rightarrow H^{0}_{J}(S)\stackrel{p}{\rightarrow}H^{0}_{J}(S)\rightarrow H^{0}_{J}(S/pS) \stackrel{\delta}\rightarrow H^{1}_{J}(S)\stackrel{p}{\rightarrow}H^{1}_{J}(S)\rightarrow 0$$ and $$0\rightarrow H^{i}_{J}(S)\stackrel{p}{\rightarrow}H^{i}_{J}(S)\rightarrow 0$$ for every $i\geq 2$. 
\end{example}

Now we present our third main result of this paper in the following theorem. It proves that, there always exists a ramified regular local ring, with a countable collection of ideals, such that we get a countable collection of local cohomology modules with support in those ideals, having finite set of associated primes.

\begin{thm}
Let $\{I_{\lambda}\}$ be a countable collection of ideals of an unramified regular local ring $R$ in mixed characteristic $p>0$, such that for every $\lambda$, $p$ is a non zero divisor of $H^i_{I_{\lambda}}(R)$ and $H^{i+1}_{I_{\lambda}}(R)$, for every $i\in C_{\lambda}\subset \NN$ and for every $I_{\lambda}$, where $C_{\lambda}$ is a countable subset of $\NN$. Then, there exists a ramified regular local ring $S$ of mixed characteristic $p> 0$, which is a homomorphic image of $R$, so that for expanded ideal $J_{\lambda}= I_{\lambda}S$ of $S$, and for corresponding $C_{\lambda}\subset \NN$, $\Ass_S H^i_{J_{\lambda}}(S)$ is finite for every $i\in C_{\lambda}$ and this is true for every $\lambda$. 
\end{thm}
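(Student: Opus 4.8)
The plan is to reduce the statement of Theorem~3.3 to a single application of Proposition~3.1, after first manufacturing the ramified extension $S$ in which all the ideals $I_\lambda$ simultaneously expand nicely. The key construction is to realize $S$ as an Eisenstein extension $R[X]/f(X)$ of the given unramified regular local ring $R$, where $f(X)\in R[X]$ is an Eisenstein polynomial. Since $R$ is unramified of mixed characteristic $p>0$, the ring $R[X]/f(X)$ is a ramified regular local ring, and it is manifestly a homomorphic image of $R[X]$; to exhibit it as a homomorphic image of $R$ itself I would note that $R$ and $R[X]$ are linked through the natural maps, and use the fact (as in the proof of Theorem~3.2) that an appropriate completion lets us regard $S$ as $R$ modulo the relation $f$, so that the hypotheses on $R$-local cohomology transfer to $R[X]$-local cohomology. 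The point is that $f(X)$ should be chosen once and for all, independently of $\lambda$, so that the same $S$ works for the entire countable family.

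Next I would set up, for each fixed $\lambda$ and each $i\in C_\lambda$, the three hypotheses needed to invoke Proposition~3.1. First, the containment $I_\lambda S = J_\lambda$ is immediate by definition of the expanded ideal, once we pass through $R[X]$ via $I_\lambda R[X]$ and note that its image in $S$ is $J_\lambda$. Second, the requirement that multiplication by $p$ on $H^i_{I_\lambda}(R[X])$ be an isomorphism must be deduced from the assumption that $p$ is a nonzerodivisor on both $H^i_{I_\lambda}(R)$ and $H^{i+1}_{I_\lambda}(R)$: the standard long exact sequence coming from $0\to R\xrightarrow{p}R\to R/pR\to 0$, together with the nonzerodivisor hypotheses, forces the relevant cohomology of $R/pR$ to behave well, and flat base change along $R\to R[X]$ preserves this. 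Third, the condition $p\notin\Ass_{\mathbb Z}H^{i+1}_{I_\lambda}(R[X])$ should follow from the same nonzerodivisor hypothesis on $H^{i+1}_{I_\lambda}(R)$ transported to $R[X]$, since $p$ being a nonzerodivisor on a module is exactly the statement that $p$ is not an associated prime over $\mathbb{Z}$.

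With all three hypotheses of Proposition~3.1 verified for the pair $(i,\lambda)$, the proposition yields directly that $p\notin\Ass_S H^i_{J_\lambda}(S)$ and that $\Ass_S H^i_{J_\lambda}(S)$ is finite. Since this argument is carried out uniformly for each $i\in C_\lambda$ and each $\lambda$, and since $C_\lambda$ and the index set for $\lambda$ are both countable, the conclusion holds for the entire countable collection. I would close by remarking that the countability enters only to organize the statement: the finiteness of each individual associated-prime set is what Proposition~3.1 delivers, and there is no interaction between different $\lambda$ once $f(X)$ is fixed.

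The main obstacle I anticipate is the careful transfer of the nonzerodivisor and associated-prime hypotheses from $R$ to $R[X]$, and the verification that a single Eisenstein polynomial $f(X)$ can be chosen so that the resulting $S$ is genuinely ramified regular local and a homomorphic image of $R$ while remaining compatible with \emph{all} the $I_\lambda$ simultaneously. In particular, one must check that $I_\lambda R[X]$ has the correct local-cohomology behavior under the flat map $R\to R[X]$, i.e. that $H^i_{I_\lambda}(R[X])\cong H^i_{I_\lambda}(R)\otimes_R R[X]$ as needed, and that the nonzerodivisor property is preserved under this base change; this is where the argument could become delicate, though it is essentially the same flat-base-change bookkeeping already used in the proof of Theorem~3.2.
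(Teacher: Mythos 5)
There are two genuine gaps here, and each is fatal on its own. First, your construction of $S$ as an Eisenstein extension $R[X]/f(X)$ cannot satisfy the conclusion: the theorem requires $S$ to be a \emph{homomorphic image} of $R$, and an Eisenstein extension is a module-finite ring \emph{extension} of $R$, not a quotient; no completion trick turns it into ``$R$ modulo the relation $f$.'' The paper's construction is entirely different: it takes $S=R/fR$ for a single element $f=g^2-p\in\m\setminus\m^2$ of $R$ itself, so that $S$ is regular of dimension one less and ramified (since $p\equiv g^2\in\m^2$ in $S$). The heart of the proof — which your proposal acknowledges as ``the main obstacle'' but never resolves — is how to choose one $f$ working for all $\lambda$ simultaneously. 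The paper does this by citing \cite{Pu2} for the countability of $\Ass_R H^i_{I_\lambda}(R)$ over the completed ring, assembling the countable set $A$ of all associated primes (other than $\m$) of the modules $H^i_{I_\lambda}(R)$ and $H^i_{I_\lambda}(R)/pH^i_{I_\lambda}(R)$ over all $\lambda$ and $i\in C_\lambda$, and invoking Burch's countable prime avoidance \cite{Bu} to find $g\in\m$ outside every prime of $A$. This step is where the countability hypothesis actually does mathematical work; in your write-up countability ``enters only to organize the statement,'' which signals that the key idea is missing.

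Second, your plan to conclude via Proposition 3.1 cannot be executed: hypothesis (2) of that proposition demands that multiplication by $p$ on $H^i_{I}(R[X])$ be an \emph{isomorphism}, while the theorem only assumes $p$ is a \emph{nonzerodivisor} on $H^i_{I_\lambda}(R)$ and $H^{i+1}_{I_\lambda}(R)$. From the long exact sequence these hypotheses yield $H^i_{I_\lambda}(R/pR)\cong H^i_{I_\lambda}(R)/pH^i_{I_\lambda}(R)$, which is in general nonzero, so multiplication by $p$ is injective but not surjective, and no amount of flat base change to $R[X]$ repairs this. The paper accordingly does \emph{not} apply Proposition 3.1; it reruns the $p$--$f$ diagram chase directly over $R$ with the quotient $S=R/fR$, where the surjectivity of the map $\phi_{i-1}$ is obtained not from vanishing of $H^i_{I_\lambda}(R/pR)$ (as in Proposition 3.1) but from the injectivity of multiplication by $f$ on $H^i_{I_\lambda}(R)/pH^i_{I_\lambda}(R)$ — precisely the property guaranteed by the prime-avoidance choice of $g$ (and the check that $(p,g^2-p)$ remains a regular sequence on these modules). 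The chase then shows $p$ is a nonzerodivisor on $H^i_{J_\lambda}(S)$, and Theorem 1.2 of \cite{Nu1} gives finiteness of $\Ass_S H^i_{J_\lambda}(S)$. So while your final appeal to \cite{Nu1} through the ``$p$ not a zerodivisor'' mechanism matches the paper in spirit, both the construction of $S$ and the route to the nonzerodivisor property need to be replaced along the lines above.
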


\begin{proof}
At first we observe that, we can always include $I_{\lambda}= 0$ in the countable collection of ideals as stated in the hypothesis along with corresponding $C_{\lambda}= \NN$. Let $(R,\m)$ be an unramified regular local ring in mixed characteristic $p>0$.  Since $p$ is an $R$-regular element, for short exact sequence $$0\rightarrow R\stackrel{p}{\rightarrow} R\rightarrow R/pR\rightarrow 0$$ we get the following long exact sequence of local cohomologies $$..\rightarrow H^{i-1}_{I_{\lambda}}(R/pR)\stackrel{\delta}\rightarrow H^{i}_{I_{\lambda}}(R)\stackrel{p}{\rightarrow}H^{i}_{I_{\lambda}}(R)\rightarrow H^{i}_{I_{\lambda}}(R/pR) \rightarrow ..$$ Since $p$ be a non zero divisor of $H^i_{I_{\lambda}} (R)$ and  $H^{i+1}_{I_{\lambda}} (R)$ for every $i\in C_{\lambda}$ and for every $I_{\lambda}$, from above long exact sequence, we get $H^i_{I_{\lambda}} (R/pR)= H^i_{I_{\lambda}} (R)/pH^i_{I_{\lambda}} (R)$.

We can assume $R$ to be a complete regular local ring. From \cite{Pu2}, we know that for every ideal $I_{\lambda}$ of $R$, and for every $i\geq 0$, the set $\Ass_R H^i_{I_{\lambda}} (R)$ is countable. Since $p$ is regular, same is true for $\Ass_R H^i_{I_{\lambda}} (R)/pH^i_{I_{\lambda}} (R)$. For any $R$-module $M$, set $\overline{\Ass}_R M= \Ass_R M- \{\m\}$. Now $A= \bigcup_{\lambda} (\cup_{i\in C_{\lambda}}\overline{\Ass}_R H^i_{I_{\lambda}} (R))\cup (\cup_{i\in C_{\lambda}} \overline{\Ass}_R H^i_{I_{\lambda}} (R)/pH^i_{I_{\lambda}} (R))$ is countable. From \cite{Bu} we get that there exists an element $g\in \m$ such that $g$ does not belong to any element of $A$. From construction, $(p, h)$ is a $H^i_{I_{\lambda}} (R)$-regular sequence. Set $h= g^2$ and thus $(p, h)$ also becomes a $H^i_{I_{\lambda}} (R)$-regular sequence where $h\in {\m}^2$. 

Now consider the sequence $(p, h-p)$. Let $(h-p)x= py$ for $x, y\in H^{i}_{I_{\lambda}}(R)$ for some $i\in C_{\lambda}$ and for some ideal $I_{\lambda}$. Then $hx= p(x+ y)$ and since $(p, h)$ is an $H^i_{I_{\lambda}} (R)$-regular sequence we get that $(p, h-p)$ is also an $H^i_{I_{\lambda}} (R)$-regular sequence. In fact, for $I_{\lambda}= 0$, we get $\overline{\Ass}_R H^0_{0} (R/pR)= \overline{\Ass}_R R/pR$. Thus, from second paragraph of the proof we get that $(p, h)$ as well as $(p, h-p)$ are $R$-regular sequence.

Set $h-p= f$. Since $h\in {\m}^2$, we find $f\in \m- {\m}^2$. Thus, it is part of a regular system of parameters. So, $R/fR$ is again a regular local ring such that $p\in {\m}^2$. Moreover, $h\notin pR$, otherwise, $h= h.1 =pr$ implies $1\in pR$ since $(p,h)$ are regular sequence. Thus, it is a contradiction. This shows $p$ is not a multiple of $h-p$. So, $R/fR$ is a ramified regular local ring of mixed characteristic $p\geq 0$. Moreover, each of $h$ and $f$ are also nonzero divisor in $R$. Thus using (b) of (12.2) Discussion of \cite{Ho}, we find $(h, p)$ as well as $(f, p)$ are also a $R$-regular sequence. So, we can consider the following diagram of short exact sequences where every row and column is exact.

\[
\CD
@. 0@. 0@. 0@. @. \\
@. @VVV @VVV @VVV @.\\
0 @>>>R@>f>>R@>>>R/fR@>>>0 @.\\
@. @Vp VV @VVp V @VVp V @. \\
0 @>>>R@>f>>R@>>>R/fR@>>>0 @.\\
@. @VVV @VVV @VVV @. \\
0@>>>R/pR@>f>>R/pR@>>>R/(p,f)R@>>>0 @.\\
@. @VVV @VVV @VVV @.\\
@. 0@. 0@. 0@. @. \endCD
\]

This above diagram yields the following diagram of long exact sequences where every rows and columns are exact.

\[
\CD
@. @. @. @. @. H^i_{I_{\lambda}}(R)@.\\
@.@VVV @VVV @VVV @VVV @Vp VV \\
@. H^{i-1}_{I_{\lambda}}(R) @>f>>H^{i-1}_{I_{\lambda}}(R)@>\psi_{i-1}>>H^{i-1}_{I_{\lambda}}(R/fR)@>>>H^i_{I_{\lambda}}(R)@>f>>H^i_{I_{\lambda}}(R)@.\\
@. @V\pi_{i-1} VV @V\pi_{i-1} VV @V\alpha_{i-1} VV @V\pi_{i} VV @V\pi_{i} VV\\
@. H^{i-1}_{I_{\lambda}}(R/pR) @>f>>H^{i-1}_{I_{\lambda}}(R/pR)@>\phi_{i-1}>>H^{i-1}_{I_{\lambda}}((R/(p,f)R)@>>>H^i_{I_{\lambda}}(R/pR)@>f>>H^i_{I_{\lambda}}(R/pR)@.\\
@. @VVV @VVV @VVV @VVV @. \\
@. H^{i}_{I_{\lambda}}(R) @>f>>H^{i}_{I_{\lambda}}(R)@>>>H^{i}_{I_{\lambda}}(R/fR)@>>>H^{i+1}_{I_{\lambda}}(R) @. @.\\
@. @Vp VV @Vp VV @Vp VV @Vp VV @. \\
@. H^{i}_{I_{\lambda}}(R) @>f>>H^{i}_{I_{\lambda}}(R)@>>>H^{i}_{I_{\lambda}}(R/fR)@>>>H^{i+1}_{I_{\lambda}}(R) @. @.\\
@. @VVV @VVV @VVV @VVV @. 
\endCD
\]

In the above diagram, $\phi_{i-1}$ is surjective since $f$ is $H^i_{I_{\lambda}} (R/pR)= H^i_{I_{\lambda}} (R)/p H^i_{I_{\lambda}} (R)$-regular. Let $x$ be in $\Ker{(H^{i}_{I_{\lambda}}(R/fR)\stackrel{p}{\rightarrow}H^{i}_{I_{\lambda}}(R/fR))}$. Then there exists some $y\in H^{i-1}_{I_{\lambda}}(R/(p, f)R)$ such that $x$ is the image of $y$ under the map $H^{i-1}_{I_{\lambda}}(R/(p, f)R)\rightarrow H^{i}_{I_{\lambda}}(R/fR)$. Since $\phi_{i-1}$ is surjective, $y= \phi_{i-1}(z)$ for some $z\in H^{i-1}_{I_{\lambda}}(R/pR)$. Now, if we come down to $H^{i}_{I_{\lambda}}(R)$ via the map $H^{i-1}_{I_{\lambda}}(R/pR)\rightarrow H^{i}_{I_{\lambda}}(R)$, due to injectivity of the multiplication map by $p$, the image of $z$ in $H^{i}_{I_{\lambda}}(R)$ is zero. Since every square in the diagram is commutative, we get that $x= 0$. So, $p$ is a nonzero divisor of $H^{i}_{I_{\lambda}}(R/fR)$. For ramified regular local ring $R/fR$, set $S= R/fR$. Then, using Theorem 1.2 of \cite{Nu1}, we get $\Ass_S H^{i}_{J_{\lambda}}(S)$ is finite.

\end{proof}



\end{document}